\def\ps@pprintTitle{%
	\let\@oddhead\@empty
	\let\@evenhead\@empty
	\def\@oddfoot{}%
	\let\@evenfoot\@oddfoot}
\theoremstyle{plain}
\newtheorem{thm}{Theorem}[section] 
\newtheorem{lem}[thm]{Lemma} 
\newtheorem{prop}[thm]{Proposition}
\theoremstyle{definition}
\newtheorem{defn}[thm]{Definition}
\theoremstyle{remark}
\newtheorem*{rem}{Remark}
\journal{Topology and its Applications}
\begin{document}

\begin{frontmatter}

\title{Topology and experimental distinguishability}


\author[]{Christine A. Aidala}
\ead{caidala@umich.edu}

\author[]{Gabriele Carcassi\corref{cor1}}
\ead{carcassi@umich.edu}

\author[]{Mark J. Greenfield}
\ead{markjg@umich.edu}

\address{University of Michigan, Ann Arbor, MI 48109, USA}

\cortext[cor1]{Corresponding author}

\begin{abstract}
	In this work we introduce the idea that the primary application of topology in experimental sciences is to keep track of what can be distinguished through experimentation. This link provides understanding and justification as to why topological spaces and continuous functions are pervasive tools in science. We first define an experimental observation as a statement that can be verified using an experimental procedure and show that observations are closed under finite conjunction and countable disjunction. We then consider observations that identify elements within a set and show how they induce a Hausdorff and second-countable topology on that set, thus identifying an open set as one that can be associated with an experimental observation. We then show that experimental relationships are continuous functions, as they must preserve experimental distinguishability, and that they are themselves experimentally distinguishable by defining a Hausdorff and second-countable topology for this collection. 
\end{abstract}

\begin{keyword}
topological spaces \sep scientific foundations \sep function space
\MSC[2010] 54H99 \sep 00A79
\end{keyword}

\end{frontmatter}


\section{Introduction}

The successful use of mathematical ideas in experimental sciences is long established and celebrated~\cite{wigner}. Topology is perhaps the most widespread, either directly~\cite{rashevsky, chichilnisky} or as a foundation to other tools~\cite{nakahara, basener}.  This leads one to ask, why is it so successful? What property is captured by topological spaces that is so fundamental for scientific investigation?

We believe experimental distinguishability to be the relevant concept. The notion of ``nearness" captured by topologies keeps track of how hard it is to tell two elements apart via scientific observation. The standard topology on the real line, for example, captures the inability to measure a continuous value with infinite precision. This connection holds in cases where a metric would not be physically meaningful (e.g.~phase space) and in discrete spaces (where each element can be individually identified).  It is fitting, then, that the use of topology is so widespread, as defining what can be experimentally distinguished is a fundamental aspect of science.

The aim of this paper is to lay down a framework that formalizes this insight. We first define experimental observations as statements paired with an experimental test that is able to verify them. We then study their properties under logical operations and conclude that they are only closed under finite conjunction and countable disjunction. Next we define experimental distinguishability and show, as our first main result, that any set of objects that are experimentally distinguishable is a Hausdorff, second-countable topological space, where the topology is formally defined in terms of the observations themselves. Last we define experimental relationships between experimentally distinguishable objects and show, as our second main result, that experimental relationships are represented by continuous functions and are themselves experimentally distinguishable.

\section{Experimental observations}

In science, a statement can be accepted as true only if there exists a way to experimentally verify it. To capture this notion we introduce the following definitions.

\begin{defn}
	A \textbf{statement} $\mathsf{s}$ is a declarative sentence that is either true or false, as in classical logic. 
\end{defn}

\begin{defn}
	An \textbf{experimental test} $\mathsf{e}$ is a repeatable procedure (i.e. it can be restarted and stopped an arbitrary number of times) which may be successful, in which case it terminates in finite time, or may be not successful, in which case it may or may not terminate.\footnote{In line with philosophical tradition\cite{mill,russell}, we can define this experimental test:
		\begin{enumerate}
			\item Find a swan
			\item If black terminate successfully
			\item Go to step 1
	\end{enumerate}}
\end{defn}

\begin{defn}
	An \textbf{experimental observation} $\mathsf{o}$ is a tuple $\llparenthesis \mathsf{s}, \mathsf{e} \rrparenthesis$ consisting of a statement $\mathsf{s}$ and an experimental test $\mathsf{e}$ such that the statement is true if and only if the  experimental test is successful. The experimental observation is \textbf{verified} if the statement is true.
\end{defn}

\section{Algebra of experimental observations}

We now want to understand how the standard Boolean algebra defined on statements carries over to experimental observations.

\begin{rem}
	Experimental observations are not closed under negation. The existence of an experimental test to verify a statement does not imply the existence of an experimental test to verify its negation.
\end{rem}

\begin{defn}
	The \textbf{conjunction} or \textbf{logical AND} of a finite collection  of experimental observations $\{\mathsf{o}_i\}_{i=1}^{n}=\{\llparenthesis \mathsf{s}_i, \mathsf{e}_i\rrparenthesis\}_{i=1}^{n}$ is the experimental observation $\bigwedge\limits_{i=1}^{n} \mathsf{o}_i = \llparenthesis \mathsf{s}, \mathsf{e}\rrparenthesis$ where $\mathsf{s} = \bigwedge\limits_{i=1}^{n} \mathsf{s}_i$ is the conjunction of the respective statements and $\mathsf{e} = \mathsf{e}_\wedge(\{\mathsf{e}_i\}_{i=1}^{n})$ is the experimental test that successfully terminates if and only if all $\{\mathsf{e}_i\}_{i=1}^{n}$ successfully terminate.
\end{defn}

\begin{proof}
	To show that the conjunction is well defined it suffices to show that we can construct a suitable experimental test.  Let $\mathsf{e}_\wedge=\mathsf{e}_\wedge(\{\mathsf{e}_i\}_{i=1}^{n})$ be the experimental procedure defined as follows:
	\begin{enumerate}
	\item for each $i=1,\ldots,n$ run the test $\mathsf{e}_i$
	\item if all tests $\mathsf{e}_i$ terminate successfully then terminate successfully
	\end{enumerate}
	This experimental procedure terminates successfully if and only if all $\mathsf{e}_i$ terminate successfully. It will do so in finite time as each of the finitely many $\mathsf{e}_i$ succeeds in finite time. Therefore $\mathsf{e}_\wedge(\{\mathsf{e}_i\}_{i=1}^{n})$ is an experimental test that is successful if and only if all statements $\{\mathsf{s}_i\}_{i=1}^{n}$ are true. So, $\bigwedge\limits_{i=1}^{n} \mathsf{o}_i = \llparenthesis\bigwedge\limits_{i=1}^{n} \mathsf{s}_i, \mathsf{e}_{\wedge}(\mathsf{e}_i)\rrparenthesis$ is an experimental observation.
\end{proof}

\begin{rem}
	Conjunction cannot be extended to a countable collection as verification would require infinite time.
\end{rem}

\begin{defn}
	The \textbf{disjunction} or \textbf{logical OR} of a countable (finite or infinite) collection of experimental observations $\{\mathsf{o}_i\}_{i=1}^{\infty}=\{\llparenthesis \mathsf{s}_i, \mathsf{e}_i\rrparenthesis\}_{i=1}^{\infty}$ is the experimental observation $\bigvee\limits_{i=1}^{\infty} \mathsf{o}_i = \llparenthesis \mathsf{s}, \mathsf{e}\rrparenthesis$ where $\mathsf{s} = \bigvee\limits_{i=1}^{\infty} \mathsf{s}_i$ is the disjunction of the respective statements and $\mathsf{e} = \mathsf{e}_\vee(\{\mathsf{e}_i\}_{i=1}^{\infty})$ is the experimental test that successfully terminates if and only if at least one experimental test in $\{\mathsf{e}_i\}_{i=1}^{\infty}$ successfully terminates.
\end{defn}

\begin{proof}
	As before, it suffices to show that we can construct a suitable experimental test. Let $\mathsf{e}_\vee=\mathsf{e}_\vee(\{\mathsf{e}_i\}_{i=1}^{\infty})$ be the experimental procedure defined as follows:
	\begin{enumerate}
	\item initialize $n$ to 1
	\item for each $i=1,\ldots,n$:
	\begin{enumerate}
		\item run the test $\mathsf{e}_i$ for $n$ seconds
		\item if $\mathsf{e}_i$ terminated successfully then terminate successfully
	\end{enumerate}
	\item increment $n$ and go to step 2
	\end{enumerate}
	Suppose there exists an $i \in \mathbb{Z}^+$ such that $\mathsf{e}_i$ will terminate successfully. Then the above procedure will eventually run that test for sufficient time for it to terminate successfully. It will do so in finite time as it will have run finitely many tests finitely many times each for a finite amount of time. Therefore $\mathsf{e}_\vee(\{\mathsf{e}_i\}_{i=1}^{\infty})$ is an experimental test that is successful if and only if at least one statement in $\{\mathsf{s}_i\}_{i=1}^{\infty}$ is successful. So $\bigvee\limits_{i=1}^{\infty} \mathsf{o}_i =\llparenthesis\bigvee\limits_{i=1}^{\infty} \mathsf{s}_i, \mathsf{e}_{\vee}(\{\mathsf{e}_i\}_{i=1}^{\infty})\rrparenthesis$ is an experimental observation.
\end{proof}

Taken together, finite conjunction and countable disjunction form the algebra of experimental observations. We also introduce the following special case, which will be useful later.

\begin{defn}
Any experimental observation whose statement is a contradiction is also called a \textbf{contradiction} and is noted by $\bot$.
\end{defn}

\begin{defn}
Two experimental observations $\mathsf{o}_1$ and $\mathsf{o}_2$ are said to be \textbf{incompatible} if the conjunction $\mathsf{o}_1\wedge\mathsf{o}_2$ is a contradiction.
\end{defn}

\section{Experimental domain}

We now want to characterize the sets of observations for which it is feasible to experimentally verify all true statements.

\begin{defn}
	An \textbf{experimental domain} is a set of observations closed under finite conjunction and countable disjunction, such that all observations can be tested in infinite time. 
\end{defn}

\begin{rem}
	We do allow infinite time for the verification of a domain with the understanding that some domains will only be partially verified in finite time. As we have, so to speak, only one infinity to spend, we spend it here to maximize its utility.
\end{rem}

At this point, the similarities between this mathematical structure and topologies are starting to emerge. In analogy to the latter, we define the following.

\begin{defn}
	A \textbf{sub-basis} of an experimental domain is any subset that can generate all others via finite conjunction and countable disjunction. A \textbf{basis} of an experimental domain is any subset that can generate all others by countable disjunction.
\end{defn}

\begin{rem}
	As for topologies, given a sub-basis one can generate a basis by taking all finite conjunctions. Any infinite sub-basis will generate a basis of the same cardinality.
\end{rem}

\begin{prop}
Let $\mathcal{D}$ be an experimental domain. Then there exists a countable basis (equivalently, sub-basis) $\mathcal{B}$ of $\mathcal{D}$.
\end{prop}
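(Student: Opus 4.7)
The plan is to exploit the infinite-time testability of $\mathcal{D}$ to extract a countable sub-basis, and then invoke the preceding remark to promote it to a countable basis. The guiding intuition mirrors the diagonal-scheduling trick used in the construction of $\mathsf{e}_\vee$: a single experimental procedure running for infinite time can only initiate countably many distinct sub-tests, because time advances through at most countably many scheduling steps.

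The first step is to formalize the master procedure that witnesses ``$\mathcal{D}$ can be tested in infinite time'' as a scheduling procedure $P$ which, running forever, invokes primitive sub-tests in the manner of $\mathsf{e}_\vee$. Since $P$ is sequential and has only countably many time slots available, the collection $\{\mathsf{e}_j\}_{j=1}^\infty$ of primitive sub-tests it invokes is countable. Let $\mathcal{B}=\{\mathsf{o}_j\}_{j=1}^\infty$ be the observations corresponding to these tests; this is the candidate sub-basis.

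The second step is to show that $\mathcal{B}$ generates every element of $\mathcal{D}$ under finite conjunction and countable disjunction. Given $\mathsf{o}\in\mathcal{D}$ with test $\mathsf{e}$, the fact that $P$ verifies $\mathsf{o}$ whenever its statement holds true forces $\mathsf{e}$ to be assemblable from the $\mathsf{e}_j$ using only the $\mathsf{e}_\wedge$ and $\mathsf{e}_\vee$ constructions introduced earlier. Once this is established, the preceding remark converts the countable sub-basis into a countable basis of the same cardinality by taking all finite conjunctions of its elements.

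I expect the main obstacle to be this second step, since formalizing how an arbitrary test in $\mathcal{D}$ decomposes into countably many primitive tests requires a careful reading of the paper's semi-informal notion of ``experimental test''. The cleanest route is probably to interpret membership in $\mathcal{D}$ as derivability from the generating tests via $\mathsf{e}_\wedge$ and $\mathsf{e}_\vee$, which makes the countability of the generators synonymous with the countability of the sub-basis and closes the argument.
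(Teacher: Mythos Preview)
Your approach is correct in spirit but takes a more constructive detour than the paper. The paper argues directly by contrapositive from the definition: if no countable basis existed, then by definition no countable sequence of observations could generate the rest, hence no procedure running through (countably) infinite time could test all of $\mathcal{D}$, contradicting the defining clause ``all observations can be tested in infinite time.'' The forward direction---that a countable basis suffices---is dispatched in one sentence: test each basis element, then deduce every other observation by computing disjunctions. Your plan instead tries to \emph{extract} the countable sub-basis explicitly from a master scheduling procedure $P$, which is a legitimate idea, but your second step---showing that the extracted primitives actually generate $\mathcal{D}$ under $\wedge$ and $\vee$---is exactly where the paper's definitional reading does all the work. Your own proposed resolution (reinterpret membership in $\mathcal{D}$ as derivability from the generators) collapses your argument back into the paper's: both ultimately treat ``testable in infinite time'' as synonymous with ``deducible from a countable sequence.'' The paper's route is shorter because it invokes that equivalence directly via contrapositive rather than routing through an intermediate construction of $P$; your route buys a more operational picture of where the countable sub-basis comes from, at the cost of having to confront head-on the informality you correctly flag in the second step.
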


\begin{proof}
If there exists a countable basis $\mathcal{B}$, then given infinite time one can test all observations in $\mathcal{B}$. Given which observations of the basis are verified, one can deduce which other observations in $\mathcal{D}$ are verified (again using infinite time) by computing the appropriate disjunctions. 

If there does not exist a countable basis, then by definition there does not exist a sequence of experimental observations in $\mathcal{D}$ from which one can deduce all other observations in $\mathcal{D}$. Hence it is impossible to test all members of $\mathcal{D}$.
\end{proof}

\section{Experimental distinguishability}

We now turn our attention to a more specific case. We want to characterize an experimental domain whose purpose is to identify an element among a set of possibilities.

\begin{defn}
	An \textbf{experimental identification domain} is the triplet \linebreak $(\mathcal{D}_X, X, x)$ where:
	\begin{itemize}
		\item $X$ is the set of \textbf{possibilities}, and satisfies $|X|>1$
		\item $x$ is the element to identify among the possibilities, therefore $x \in X$
		\item $\mathcal{D}_X$ is an experimental domain containing all possible experimental observations of the form $\mathsf{o} = \llparenthesis x\in U, \mathsf{e}_\in(U)\rrparenthesis$ where $U \subseteq X$ is a set of possibilities and $\mathsf{e}_\in(U)$ is an experimental test that succeeds if and only if $x \in U$
	\end{itemize}
	Any subset $U\subseteq X$ for which such an observation exists is said to be a \textbf{verifiable set}.
\end{defn}

\begin{lem}
\label{setbehavior}
	Let $U_1, U_2, ... , U_n, ...$ be a countably infinite sequence of verifiable sets. The finite intersection $\bigcap\limits_{i=1}^{n} U_i$ and the countable union $\bigcup\limits_{i=1}^{\infty} U_i$ are verifiable sets.
\end{lem}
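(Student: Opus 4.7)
The plan is to leverage directly the closure properties of experimental domains under finite conjunction and countable disjunction, and translate set-theoretic operations on verifiable sets into logical operations on the associated observations.

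First I would unpack the hypothesis: each $U_i$ being verifiable means that there exists an experimental observation $\mathsf{o}_i = \llparenthesis x \in U_i, \mathsf{e}_\in(U_i) \rrparenthesis \in \mathcal{D}_X$. The next step is to note the set-theoretic identities that underlie the whole argument: $x \in \bigcap_{i=1}^{n} U_i$ is logically equivalent to the finite conjunction of the statements $x \in U_i$ for $i = 1, \ldots, n$, and $x \in \bigcup_{i=1}^{\infty} U_i$ is logically equivalent to the countable disjunction of the statements $x \in U_i$ for $i \geq 1$.

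With those equivalences in hand, the finite intersection case follows by taking $\bigwedge_{i=1}^{n} \mathsf{o}_i$, which is a well-defined experimental observation by the earlier result on conjunction and lies in $\mathcal{D}_X$ by closure of the experimental domain under finite conjunction. Its statement is $\bigwedge_{i=1}^{n} (x \in U_i)$, equivalently $x \in \bigcap_{i=1}^{n} U_i$, so the combined test $\mathsf{e}_\wedge(\{\mathsf{e}_\in(U_i)\}_{i=1}^{n})$ acts as an $\mathsf{e}_\in\bigl(\bigcap_{i=1}^{n} U_i\bigr)$, witnessing verifiability. The countable union case is entirely analogous using $\bigvee_{i=1}^{\infty} \mathsf{o}_i$, which exists as an experimental observation by the earlier result on countable disjunction and belongs to $\mathcal{D}_X$ by closure under countable disjunction; its statement is $x \in \bigcup_{i=1}^{\infty} U_i$, and the assembled test serves as $\mathsf{e}_\in\bigl(\bigcup_{i=1}^{\infty} U_i\bigr)$.

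I do not anticipate a genuine obstacle here: the proof is essentially a bookkeeping exercise matching the logical connectives on statements to the set operations on their defining subsets, with the heavy lifting already done in the constructions of $\mathsf{e}_\wedge$ and $\mathsf{e}_\vee$. The only subtlety worth stating explicitly is that the experimental domain axioms guarantee that the newly constructed observation lies in $\mathcal{D}_X$, so it qualifies as the observation witnessing that the intersection or union is verifiable in the sense of the definition of an experimental identification domain.
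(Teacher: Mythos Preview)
Your proposal is correct and follows essentially the same approach as the paper: translate the set operations into the corresponding logical connectives on the associated observations, invoke the already-established closure of experimental observations under finite conjunction and countable disjunction, and read off that the resulting observation has the form $\llparenthesis x\in W,\mathsf{e}_\in(W)\rrparenthesis$ for $W$ the desired intersection or union. The only cosmetic difference is that you explicitly appeal to the domain's closure axioms to place the new observation in $\mathcal{D}_X$, whereas the paper leaves this implicit via the ``contains all possible observations of this form'' clause in the definition.
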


\begin{proof}
	We show that the finite intersection of verifiable sets is a verifiable set. Let $U_1, U_2, ... , U_n \subseteq X$ be $n$ verifiable sets. For each $U_i$ there exists an experimental observation $\mathsf{o}_i = \llparenthesis x\in U_i, \mathsf{e}_\in(U_i) \rrparenthesis$. Consider the finite conjunction $\mathsf{o} = \bigwedge\limits_{i=1}^{n} \mathsf{o}_i = \llparenthesis \bigwedge\limits_{i=1}^{n} x\in U_i , \mathsf{e}_{\wedge}(\mathsf{e}_\in(U_i)) \rrparenthesis=\llparenthesis x\in \bigcap\limits_{i=1}^{n} U_i, \mathsf{e}_\in(\bigcap\limits_{i=1}^{n} U_i)\rrparenthesis$ which is an experimental observation. Thus $\bigcap\limits_{i=1}^{n} U_i$ is a verifiable set.
	
	We show that the countable union of verifiable sets is a verifiable set. Let $U_1, U_2, ... , U_n, ... \subseteq X$ be an infinite sequence of verifiable sets. For each $U_i$ there exists an experimental observation $\mathsf{o}_i = \llparenthesis x\in U_i, \mathsf{e}_\in(U_i)\rrparenthesis$. Consider the infinite disjunction $\mathsf{o} = \bigvee\limits_{i=1}^{\infty} \mathsf{o}_i = \llparenthesis\bigvee\limits_{i=1}^{\infty} x\in U_i, \mathsf{e}_{\vee}(\mathsf{e}_\in(U_i))\rrparenthesis=\llparenthesis x\in \bigcup\limits_{i=1}^{\infty} U_i, \mathsf{e}_\in(\bigcup\limits_{i=1}^{\infty} U_i)\rrparenthesis$ which is an experimental observation. Thus $\bigcup\limits_{i=1}^{\infty} U_i$ is a verifiable set.
\end{proof}

To make sure we have enough experimental observations to tell the possibilities apart, we introduce the following definition.

\begin{defn}
A set of possibilities $X$ is \textbf{experimentally distinguishable} if there exists an experimental identification domain $(\mathcal{D}_X, X, x)$ such that for any two possibilities $x_1, x_2 \in X$ we can find two incompatible experimental observations $\llparenthesis x\in U_1, \mathsf{e}_\in(U_1)\rrparenthesis, \llparenthesis x\in U_2, \mathsf{e}_\in(U_2)\rrparenthesis\in\mathcal{D}_X$ such that $x_i\in U_i$ for $i=1,2$. 
\end{defn}

We are now ready to prove the first main result of this work.

\begin{thm}
A set of experimentally distinguishable possibilities $X$ has a natural Hausdorff, second-countable topology $(X,\mathsf{T})$ with the open sets given by the verifiable sets of the associated experimental identification domain $(\mathcal{D}_X, X, x)$.
\end{thm}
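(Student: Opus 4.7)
The plan is to establish three claims in succession: the family $\mathsf{T}$ of verifiable sets satisfies the topology axioms, the resulting topology is Hausdorff, and it is second-countable. The crucial auxiliary device is the countable basis $\mathcal{B}$ of $\mathcal{D}_X$ provided by the preceding proposition; translated to sets, this yields a countable collection of verifiable sets such that every verifiable set is a countable union of members of it.

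For the topology axioms, closure under finite intersection and countable union is immediate from Lemma~\ref{setbehavior}. To exhibit $\emptyset$ as verifiable, I use $|X|>1$ and the distinguishability hypothesis on two distinct points to obtain incompatible observations $\mathsf{o}_1,\mathsf{o}_2 \in \mathcal{D}_X$; by definition their conjunction is a contradiction, and since its statement is $x\in U_1\cap U_2$, this forces $U_1\cap U_2=\emptyset$ and supplies the required observation. To upgrade countable union to arbitrary union (and simultaneously to obtain $X\in\mathsf{T}$), I take an arbitrary family $\{V_\alpha\}$ of verifiable sets and form $\mathcal{B}'=\{B\in\mathcal{B}: B\subseteq V_\alpha\text{ for some }\alpha\}$. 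The basis property says each $V_\alpha$ is a countable union of elements of $\mathcal{B}$, so every $x\in\bigcup_\alpha V_\alpha$ lies in some $B\in\mathcal{B}'$; hence $\bigcup_\alpha V_\alpha=\bigcup\mathcal{B}'$ is a countable union of verifiable sets and is verifiable by Lemma~\ref{setbehavior}. Taking $\{V_\alpha\}$ to be the family of all verifiable sets and noting that distinguishability ensures every point lies in at least one of them recovers $X\in\mathsf{T}$ as a special case.

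Hausdorffness then falls out immediately: given distinct $x_1,x_2$, distinguishability produces incompatible observations whose underlying verifiable sets $U_1,U_2$ contain $x_1,x_2$ respectively and, by the same incompatibility argument used for $\emptyset$, satisfy $U_1\cap U_2=\emptyset$. Second-countability is equally short: the verifiable sets underlying $\mathcal{B}$ form a countable topological basis, because every open (verifiable) set is a countable union of them. The main obstacle I anticipate is the arbitrary-union axiom, since the algebra of observations supplies only countable disjunction; the trick is to use the countable basis to replace an arbitrary union by a countable one, which is exactly what the $\mathcal{B}'$ construction accomplishes. The $\emptyset$ and $X$ cases also require small separate arguments, because neither observation is explicitly handed to us by the definition of experimental identification domain.
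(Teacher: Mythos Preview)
Your proposal is correct and follows essentially the same route as the paper's own proof: use Lemma~\ref{setbehavior} for finite intersections and countable unions, extract $\emptyset$ from a pair of incompatible observations, obtain second-countability from the countable basis of $\mathcal{D}_X$, and reduce arbitrary unions to countable ones by rewriting them as unions of basis elements. Your write-up is somewhat more explicit than the paper's (in particular your $\mathcal{B}'$ construction spells out what the paper compresses into one sentence), but the underlying argument is the same.
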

\begin{proof}
First, from the definition one can see that for all $x\in X$, there exists a verifiable set $U$ with $x\in U$. Therefore the union of a (countable) basis is the verifiable set $X$. Further, there exist at least two incompatible experimental observations, corresponding to two disjoint sets, so the empty set is a verifiable set. Now, \ref{setbehavior} shows that the collection $\mathsf{T}$ is closed under finite intersection and countable union. Because $\mathsf{T}$ is determined by an experimental domain, there is a countable basis of observations which translates to a countable basis of open (verifiable) sets, so it is second-countable. To show it is closed under arbitrary union, notice that an arbitrary union may be rewritten as a union of basis elements, which is then a countable union, and so it remains in $\mathsf{T}$. That the topology is Hausdorff is immediate from the last part of the definition. 
\end{proof}

\begin{rem}
As any Hausdorff, second-countable topological space has at most cardinality of the continuum, that is also the greatest cardinality that a set of experimentally distinguishable objects can have. We can conclude that sets of mathematical objects, such as all functions from $\mathbb{R}$ to $\mathbb{R}$, that do not satisfy this requirement are not good candidates to represent scientific concepts.
\end{rem}

\section{Experimental relationships}

We now want to characterize relationships between two experimentally distinguishable elements. Such relationships can be defined either on the values (i.e.~the possibilities) or on the observations (i.e.~the experimental domain). We need to show that both definitions lead to the same mathematical object.

\begin{defn}[Experimental relationship between possibilities]
	Let \linebreak $(\mathcal{D}_X, X, x)$ and $(\mathcal{D}_Y, Y, y)$ be two experimental identification domains. An \textbf{experimental relationship} is a map $f : X \rightarrow Y$ that can be used within an experimental test.
\end{defn}

\begin{prop}
	The experimental relationship $f$ defined above is a continuous function.
\end{prop}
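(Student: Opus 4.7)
The plan is to prove continuity in the standard way, namely by showing that the preimage of every open (equivalently, verifiable) set in $Y$ is a verifiable set in $X$. Since the topologies on $X$ and $Y$ are, by the previous theorem, exactly the verifiable sets of their respective experimental identification domains, this is equivalent to continuity in the topological sense.

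First I would fix an arbitrary verifiable set $V \subseteq Y$, so that by definition there is an experimental observation $\llparenthesis y \in V, \mathsf{e}_\in(V) \rrparenthesis \in \mathcal{D}_Y$. The goal is to build an experimental test $\mathsf{e}_\in(f^{-1}(V))$ that succeeds if and only if $x \in f^{-1}(V)$. The construction is the obvious composite procedure: given the element $x$ under investigation, use $f$ inside the test to produce $f(x) \in Y$, then run $\mathsf{e}_\in(V)$ on that output. This terminates successfully in finite time precisely when $f(x) \in V$, i.e.\ precisely when $x \in f^{-1}(V)$. The hypothesis that $f$ ``can be used within an experimental test'' is exactly what makes this composite procedure a legitimate experimental test; this is the step where the informal physical content of the definition gets cashed in, and it is the main (really the only) conceptual obstacle in the argument.

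Having produced such a test, the tuple $\llparenthesis x \in f^{-1}(V), \mathsf{e}_\in(f^{-1}(V)) \rrparenthesis$ is an experimental observation and belongs to $\mathcal{D}_X$ (which, by definition of an experimental identification domain, contains every observation of this membership form whenever such a test exists). Therefore $f^{-1}(V)$ is a verifiable set, hence open in the topology on $X$. Since $V$ was an arbitrary open set of $Y$, this establishes continuity of $f$.

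As a minor stylistic matter, one could instead verify the condition only on a countable basis $\mathcal{B}_Y$ of $Y$ (which exists by the proposition in Section~4), and then appeal to Lemma~\ref{setbehavior} to conclude that $f^{-1}(V) = \bigcup_i f^{-1}(B_i)$ is a countable union of verifiable sets and hence verifiable. This costs nothing extra, but it also gains nothing, since the composite-test construction already works uniformly for every verifiable $V$.
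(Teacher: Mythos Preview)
Your proof is correct and follows essentially the same approach as the paper: fix a verifiable set in $Y$, build the composite test ``apply $f$, then run $\mathsf{e}_\in(V)$,'' and conclude that $f^{-1}(V)$ is verifiable in $X$. The paper presents exactly this argument, without the optional basis remark.
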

\begin{proof}
Let $o_y = \llparenthesis y\in U_Y,\mathsf{e}_{\in}(U_Y)\rrparenthesis \in \mathcal{D}_Y$.  Since $f$ can be used within an experimental test, consider the following experimental procedure:
\begin{enumerate}
	\item map $x$ to $y=f(x)$
	\item run the test $\mathsf{e}_{\in}(U_Y)$
\end{enumerate}
It will be successful if and only if $y \in U_Y$. Since $y=f(x)$, it is successful if and only if $x \in f^{-1}(U_Y)$: the procedure is the test $\mathsf{e}_{\in}(f^{-1}(U_Y))$ on $x$. This means that $o_x = \llparenthesis x \in f^{-1}(U_Y),\mathsf{e}_{\in}(f^{-1}(U_Y))\rrparenthesis$ is an experimental observation and it must be in $\mathcal{D}_X$ since $\mathcal{D}_X$ contains all possible experimental observations of that form. It follows that $f^{-1}(U_Y)$ is a verifiable set and must be part of the topology $\mathsf{T}_X$.
\end{proof}

\begin{defn}[Experimental relationship between observations]
	Let $(\mathcal{D}_X, X, x)$ and $(\mathcal{D}_Y, Y, y)$ be two experimental identification domains. An \textbf{experimental relationship} is a map $g : \mathcal{D}_Y \rightarrow \mathcal{D}_X$ such that if $\mathsf{o} \in \mathcal{D}_Y$ is verified then $g(\mathsf{o}) \in \mathcal{D}_X$ must also be verified. To be consistent, such a relationship must have these properties:
	\begin{enumerate}
	\item it is compatible with conjunction and disjunction: for any $\mathsf{o}_1, \mathsf{o}_2 \in \mathcal{D}_y$, we have $g(\mathsf{o}_1 \wedge \mathsf{o}_2)=g(\mathsf{o}_1)\wedge g(\mathsf{o}_2)$ and $g(\mathsf{o}_1 \vee \mathsf{o}_2)=g(\mathsf{o}_1)\vee g(\mathsf{o}_2)$
	\item contradiction leads to contradiction: $g(\bot) = \bot$
	\item no knowledge leads to no knowledge: if $Y$ is the verifiable set associated with $\mathsf{o} \in \mathcal{D}_Y$ then $X$ is the verifiable set associated with  $g(\mathsf{o})$
	\end{enumerate}
\end{defn}

\begin{prop}
	For each experimental relationship $g$ defined above there exists a unique continuous function $f: X \rightarrow Y$ such that $g(\llparenthesis y\in U_Y,\mathsf{e}_{\in}(U_Y)\rrparenthesis) = \llparenthesis x\in f^{-1}(U_Y),\mathsf{e}_{\in}(f^{-1}(U_Y))\rrparenthesis$.
\end{prop}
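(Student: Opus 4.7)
The plan is to recover $f$ by letting the data of $g$ single out, for each $x \in X$, a unique candidate value $f(x) \in Y$, and then to verify the tautological identity $\tilde{g}(U) = f^{-1}(U)$, where $\tilde{g} : \mathsf{T}_Y \to \mathsf{T}_X$ is the map induced by $g$ on verifiable sets (via the bijection between open sets and their associated observations). The three consistency properties of $g$ translate directly into properties of $\tilde{g}$: it preserves finite intersections and countable unions, sends $\emptyset$ to $\emptyset$, and sends $Y$ to $X$. From intersection compatibility alone, monotonicity follows: $V \subseteq W$ implies $\tilde{g}(V) = \tilde{g}(V \cap W) = \tilde{g}(V) \cap \tilde{g}(W) \subseteq \tilde{g}(W)$.

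Fix $x \in X$ and set $U_x = \bigcup \{ V \in \mathsf{T}_Y : x \notin \tilde{g}(V) \}$ and $K_x = Y \setminus U_x$. The first key step is to show $|K_x| = 1$, which lets me define $f(x)$ to be its unique element. For nonemptiness, second countability of $\mathsf{T}_Y$ together with monotonicity rewrites $U_x$ as a countable union of basis elements $B_n$ with $x \notin \tilde{g}(B_n)$; countable-union compatibility then yields $x \notin \tilde{g}(U_x)$, so $U_x = Y$ would give $x \notin \tilde{g}(Y) = X$, absurd. For uniqueness, distinct $y_1, y_2 \in K_x$ would admit, by Hausdorffness of $\mathsf{T}_Y$, disjoint open neighborhoods $V_i \ni y_i$; intersection compatibility forces $\tilde{g}(V_1) \cap \tilde{g}(V_2) = \tilde{g}(\emptyset) = \emptyset$, so $x$ misses one of the two images, say $x \notin \tilde{g}(V_1)$, placing $V_1 \subseteq U_x$ and contradicting $y_1 \in V_1 \cap K_x$.

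The main obstacle is then to verify $\tilde{g}(U) = f^{-1}(U)$ for every open $U \subseteq Y$. The inclusion $f^{-1}(U) \subseteq \tilde{g}(U)$ is immediate from the definition of $K_x$: if $f(x) \in U$, then $U$ is an open neighborhood of $f(x)$, hence $x \in \tilde{g}(U)$. The reverse inclusion is where all three hypotheses on $\mathsf{T}_Y$ and $\tilde{g}$ must cooperate. Suppose for contradiction that $x \in \tilde{g}(U)$ but $f(x) \notin U$. For each $y \in U$, Hausdorff produces disjoint opens $V_y \ni y$ and $W_y \ni f(x)$; since $f(x) \in W_y$ forces $x \in \tilde{g}(W_y)$, intersection compatibility gives $x \notin \tilde{g}(V_y)$, and hence also $x \notin \tilde{g}(V_y \cap U)$ by monotonicity. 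The sets $V_y \cap U$ cover $U$ as $y$ ranges over $U$; refining by second countability yields countably many basis elements $B \subseteq U$ with $x \notin \tilde{g}(B)$ whose union is $U$, and countable-union compatibility then produces $x \notin \tilde{g}(U)$, the desired contradiction.

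Once the identity holds, continuity of $f$ is automatic because $f^{-1}(U) = \tilde{g}(U)$ is a verifiable (hence open) set. Uniqueness is a standard Hausdorff argument: any other continuous $f'$ satisfying $f'^{-1}(U) = \tilde{g}(U) = f^{-1}(U)$ for all open $U$ must agree with $f$ pointwise, since two distinct values $f(x) \neq f'(x)$ could be separated by disjoint opens, landing $x$ in one preimage but not the other.
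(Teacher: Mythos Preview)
Your proof is correct and takes a genuinely different route from the paper. The paper first extends the induced map $\tilde g:\mathsf{T}_Y\to\mathsf{T}_X$ to a map $\bar g:\sigma_Y\to\sigma_X$ between the Borel $\sigma$-algebras (compatible also with complements), then defines $f(x)=y$ iff $x\in\bar g(\{y\})$, using that singletons are closed (hence Borel) in a Hausdorff space; the verification $g=f^{-1}|_{\mathsf{T}_Y}$ is then carried out at the Borel level. You instead stay entirely within the topology: for each $x$ you form the ``bad'' open set $U_x=\bigcup\{V\in\mathsf{T}_Y:x\notin\tilde g(V)\}$, prove its complement $K_x$ is a singleton, and let that singleton be $f(x)$. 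Your argument makes the role of second countability explicit---it is invoked twice, once to reduce $U_x$ to a countable union (so that $x\notin\tilde g(U_x)$, giving $K_x\neq\emptyset$) and once via a Lindel\"of-type refinement in the proof of $\tilde g(U)\subseteq f^{-1}(U)$---whereas the paper hides these uses inside the $\sigma$-algebra extension. Your approach is more elementary and arguably more transparent about exactly where Hausdorffness and second countability enter; the paper's approach is more structural but glosses over why the Borel extension is well defined and why $x\in\bar g(V)$ forces $x\in\bar g(\{y\})$ for some $y\in V$ (a step that tacitly needs countability as well).
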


\begin{proof}
	First we reformulate $g$ in terms of the open set. We can redefine $g: \mathsf{T}_Y \rightarrow \mathsf{T}_X$ to be the map between the verifiable sets corresponding to the experimental observations. This map has the following properties:
	\begin{enumerate}
		\item it is compatible with union and intersection, i.e. for any subsets $V_1, V_2 \subseteq Y$, we have $g(V_1 \cap V_2)=g(V_1)\cap g(V_2)$ and $g(V_1 \cup V_2)=g(V_1)\cup g(V_2)$
		\item $g(\emptyset) = \emptyset$
		\item $g(Y) = X$
	\end{enumerate}

	Then we construct the unique extension $\bar{g}:\sigma_Y\to\sigma_X$ to the Borel $\sigma$-algebras of $X$ and $Y$, respectively $\sigma_X$ and $\sigma_Y$, such that $\bar{g}|_{\mathsf{T}_Y}=g$ and $\bar{g}$ is compatible with union, intersection and complements. Let $\bar{g}(V) = g(V)$ for all open sets $V \in \mathsf{T}_Y$. Let $A \in \sigma_Y$ (not necessarily open) and $A^C$ be its complement. We must have $\bar{g}(A^C) = \bar{g}(A)^C = X\setminus \bar{g}(A)$ for $\bar{g}$ to be compatible with complements. Recall that all Borel sets in $\sigma_Y$ and $\sigma_X$ may be written as some combination of unions, intersections, and complements of open sets. Thus, the construction uniquely determines what $\bar{g}$ should output on any Borel set. We need only check that the output is still a Borel set. But by definition of $\bar{g}$, the outputs will be given as unions, intersections, and complements of outputs of $g$, which are open sets, and so the image of $\bar{g}$ is contained in $\sigma_X$.  $\bar{g}$ is well defined.
	
	Next we define $\hat{g}:Y\to\sigma_X$ such that $\hat{g}(y) = \bar{g}(\{y\})$. Since $Y$ is Hausdorff, every singleton $\{y\}$ is closed and is therefore a Borel set. Therefore $\bar{g}(\{y\})$ is well defined and so is $\hat{g}(y)$.
	
	We claim that $\hat{g}(y_1)\cap\hat{g}(y_2) = \emptyset$ if and only if $y_1\neq y_2$ for all $y_1,y_2\in Y$ such that $\hat{g}(y_i)\neq\emptyset$ for $i=1,2$. If $y_1\neq y_2$ we have
	$$
	\hat{g}(y_1)\cap\hat{g}(y_2) = \bar{g}(\{y_1\})\cap\bar{g}(\{y_2\}) = \bar{g}(\{y_1\}\cap\{y_2\}) = \bar{g}(\emptyset) = \emptyset.
	$$
	Conversely, if $y_1 = y_2$ we have
	$$
	\hat{g}(y_1)\cap\hat{g}(y_2) = 	\hat{g}(y_1)\cap\hat{g}(y_1) = 
	\hat{g}(y_1) \neq \emptyset.
	$$
	
	We are now ready to define $f: X\to Y$ such that $f(x) = y$ if and only if $x\in \hat{g}(y)$. Since $g(Y)=X$, there exists $y\in Y$ such that $x\in\hat{g}(y)$. By the preceding claim, this $y$ is unique. $f: X\to Y$ is well defined. Note that no arbitrary choices were made that led to the construction of $f$, which is therefore determined uniquely by $g$. 
	
	Now we show that $g = f^{-1} |_{\mathsf{T}_Y}$. Let $V\in\mathsf{T}_Y$. We want to show $f^{-1}(V) = g(V)$. Let $x\in f^{-1}(V)$. Then for some $y \in V$ we have $f(x)=y$. $x\in \hat{g}(y)$ by construction of $f$. $\hat{g}(y) \subset g(V)$ since $\{y\}\subset V$, so $x\in g(V)$. $f^{-1}(V) \subseteq g(V)$. Conversely, let $x\in g(V)=\bar{g}(V)$. Then for some $y\in V$, we have $x\in\bar{g}(\{y\})\subset\bar{g}(V)$. But then by definition we have $f(x)=y$, so $x\in f^{-1}(V)$. $f^{-1}(V) \supseteq g(V)$. $f^{-1}(V) = g(V)$ for all $V\in\mathsf{T}_Y$ and therefore $f^{-1}|_{\mathsf{T}_Y}=g$.
	
	Lastly we claim $f$ is continuous. It is so since $g = f^{-1} |_{\mathsf{T}_Y}$ takes open sets to open sets. 
\end{proof}

We can now state the second main result of this work.

\begin{thm}
	An experimental relationship between two sets $X$ and $Y$ of experimentally distinguishable possibilities is a continuous function $f : X \rightarrow Y$ between the respective natural topologies $(X,\mathsf{T}_X)$ and $(Y,\mathsf{T}_Y)$. 
\end{thm}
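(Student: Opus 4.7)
The plan is to assemble this theorem directly from the two preceding propositions, which already do essentially all of the work; the only remaining content is to point out that the two definitions of experimental relationship (one as a map $f:X\to Y$ on possibilities, one as a map $g:\mathcal{D}_Y\to\mathcal{D}_X$ on observations) induce the same class of continuous functions, and that this is the correct object to study.

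First I would dispatch the ``possibility'' definition. The first proposition of the section already shows that if $f:X\to Y$ can be used inside an experimental test, then for every $U_Y$ corresponding to an observation $\mathsf{o}_y\in\mathcal{D}_Y$ the preimage $f^{-1}(U_Y)$ is a verifiable set of $X$, i.e.\ an open set of $\mathsf{T}_X$. Since $\mathsf{T}_Y$ is generated by a basis of such verifiable sets, and preimages commute with arbitrary unions and finite intersections, $f$ pulls every $V\in\mathsf{T}_Y$ back into $\mathsf{T}_X$. Hence $f$ is continuous in the sense of the natural topologies from the previous section.

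Next I would dispatch the ``observation'' definition. The second proposition of the section constructs, from any admissible $g:\mathcal{D}_Y\to\mathcal{D}_X$, a unique function $f:X\to Y$ such that $g$ is precisely $f^{-1}$ restricted to $\mathsf{T}_Y$, and verifies that this $f$ is continuous. So every observation-level experimental relationship is represented by a continuous map. Conversely, given a continuous $f:X\to Y$, the assignment $U_Y\mapsto f^{-1}(U_Y)$ (extended to observations in the obvious way) is easily checked to satisfy the three consistency requirements of the observation-level definition (compatibility with $\wedge,\vee$, preservation of $\bot$, and $g(Y)=X$). I would state these two directions as the two halves of a bijective correspondence between experimental relationships and continuous functions $X\to Y$.

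I expect no serious obstacle: everything needed has already been proved. The one point I would be careful about is a clean statement that the correspondence is well-defined in \emph{both} directions, so that the theorem is not merely an implication but an equivalence. This amounts to verifying that the $g$ induced by a possibility-level $f$ really lies in the class of maps to which the second proposition applies, and that the $f$ recovered from $g$ via that proposition, when fed back through the first proposition, yields $g$ again. Both follow immediately from the identity $g(U_Y)=f^{-1}(U_Y)$ combined with the uniqueness clause in the second proposition, so the argument remains short and the theorem stands as a clean consolidation of the two preceding results.
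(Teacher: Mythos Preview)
Your proposal is correct and follows essentially the same route as the paper: the theorem is just a consolidation of the two preceding propositions, and the paper's own proof is a single sentence pointing back to them. You actually go further than the paper does in checking that the correspondence is bijective; the paper is content to observe that each definition yields a continuous function and leaves it at that.
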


\begin{proof}
	As we saw in the previous results, both definitions lead to experimental relationships being fully characterized by a continuous function.
\end{proof}

\begin{rem}
	This result gives a formal justification as to why continuous functions are prevalent in science in general and in physics in particular. As topologies capture experimental distinguishability, continuous functions preserve it.
\end{rem}

\section{Distinguishability of experimental relationships}

To conclude we want to make sure that experimental relationships are themselves experimentally distinguishable. To do so it suffices to show that the set of continuous functions between two Hausdorff, second-countable topological spaces can be given a topology that is Hausdorff and second-countable.

\begin{defn} Let $X$ and $Y$ be two topological spaces. Let $C(X,Y)$ denote the set of all continuous functions from $X$ to $Y$. Let $\mathcal{B}_X$ and $\mathcal{B}_Y$ be two bases for $X$ and $Y$ respectively. The \textbf{basis-to-basis topology} $\mathsf{T}(C(X,Y), \mathcal{B}_X, \mathcal{B}_Y)$ on $C(X,Y)$ with respect to the basis $\mathcal{B}_X$ and $\mathcal{B}_Y$ is the topology generated by all sets of the form 
	$$
	V(U_X, U_Y) = \{f\in C(X,Y) : f(U_X)\subset U_Y\}
	$$
where $U_X \in \mathcal{B}_X$ and $U_Y \in \mathcal{B}_Y$.
\end{defn}

\begin{proof}
	The collection $\mathsf{T}(C(X,Y), \mathcal{B}_X, \mathcal{B}_Y)$ is defined to be the topology generated by the sets $V(U_X,U_Y)$, so it contains the empty set and is closed under arbitrary union and finite intersection by definition. To see why these sets contain every continuous function, let $f\in C(X,Y)$. Then for any $U_Y\in \mathcal{B}_Y$, we can find some $U_X\in \mathcal{B}_X$ such that $U_X\subseteq f^{-1}(U_Y)$. Then $f\in V(U_X,U_Y)$.
\end{proof}

\begin{prop}
	Let $X$ and $Y$ be two Hausdorff and second-countable topological spaces. Let $C(X,Y)$ denote the set of all continuous functions from $X$ to $Y$. Let $\mathcal{B}_X$ and $\mathcal{B}_Y$ be two countable bases for $X$ and $Y$ respectively. The basis-to-basis topology $\mathsf{T}(C(X,Y), \mathcal{B}_X, \mathcal{B}_Y)$ on $C(X,Y)$ with respect to the bases $\mathcal{B}_X$ and $\mathcal{B}_Y$ is Hausdorff and second-countable. 
\end{prop}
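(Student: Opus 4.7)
The plan is to handle the two conclusions separately, as they are essentially independent, and the whole proof is fairly routine once the right basis elements are chosen.

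For second-countability, I would observe that the sub-basis used to generate the topology, namely $\{V(U_X,U_Y) : U_X\in\mathcal{B}_X,\;U_Y\in\mathcal{B}_Y\}$, is indexed by $\mathcal{B}_X\times\mathcal{B}_Y$, a product of two countable sets and hence countable. A basis for the generated topology is obtained by taking finite intersections of sub-basis elements, and the collection of finite subsets of a countable set is countable, so the resulting basis is countable. Therefore $\mathsf{T}(C(X,Y),\mathcal{B}_X,\mathcal{B}_Y)$ is second-countable.

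For the Hausdorff property, given two distinct $f,g\in C(X,Y)$, I would first pick a point $x\in X$ with $f(x)\neq g(x)$. Since $Y$ is Hausdorff and $\mathcal{B}_Y$ is a basis, I can find disjoint basis elements $U_{Y,1},U_{Y,2}\in\mathcal{B}_Y$ with $f(x)\in U_{Y,1}$ and $g(x)\in U_{Y,2}$. By continuity of $f$ and $g$, the set $f^{-1}(U_{Y,1})\cap g^{-1}(U_{Y,2})$ is open in $X$ and contains $x$, so there exists $U_X\in\mathcal{B}_X$ with $x\in U_X\subseteq f^{-1}(U_{Y,1})\cap g^{-1}(U_{Y,2})$. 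This gives $f\in V(U_X,U_{Y,1})$ and $g\in V(U_X,U_{Y,2})$, two open neighborhoods in the basis-to-basis topology.

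To conclude, I would verify that these two sub-basis neighborhoods are disjoint: if some $h$ belonged to both, then $h(U_X)\subseteq U_{Y,1}\cap U_{Y,2}=\emptyset$, but $U_X$ is non-empty (it contains $x$), so $h(U_X)$ cannot be empty, a contradiction. The most delicate point — really the only one that is not bookkeeping — is exactly this last step: I need to be careful to use a single non-empty $U_X\in\mathcal{B}_X$ (inside the intersection of the two pre-images) rather than two different basis elements, because the disjointness of $V(U_X,U_{Y,1})$ and $V(U_X,U_{Y,2})$ hinges on their sharing the same domain basis element and on that element being non-empty.
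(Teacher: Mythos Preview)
Your proof is correct and follows essentially the same route as the paper's: both arguments count the sub-basis $\{V(U_X,U_Y)\}$ via $\mathcal{B}_X\times\mathcal{B}_Y$ for second-countability, and for Hausdorffness both separate $f(x)\neq g(x)$ by disjoint basic opens in $Y$, pass to a single basic $U_X\subseteq f^{-1}(U_{Y,1})\cap g^{-1}(U_{Y,2})$, and use the resulting sub-basic sets $V(U_X,U_{Y,1})$ and $V(U_X,U_{Y,2})$. Your write-up is in fact slightly more careful than the paper's, since you explicitly spell out why the two sub-basic neighborhoods are disjoint using the non-emptiness of $U_X$.
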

\begin{proof}
	First we show that $\mathsf{T}(C(X,Y), \mathcal{B}_X, \mathcal{B}_Y)$ is second-countable. We note that the sub-basis $\{V(U_X, U_Y) \, |\,   U_X \in \mathcal{B}_X , U_Y \in \mathcal{B}_Y \}$ is countable since $\mathcal{B}_X$ and $\mathcal{B}_Y$ are countable and so will be the bases which it generates. This means $\mathsf{T}(C(X,Y), \mathcal{B}_X, \mathcal{B}_Y)$ is second-countable.
	
	Next we show that $\mathsf{T}(C(X,Y), \mathcal{B}_X, \mathcal{B}_Y)$ is Hausdorff. Let $f,g:X\to Y$ be two distinct continuous functions. Then for some $x\in X$, we have $f(x)\neq g(x)$. Pick $V_1, V_2$ disjoint open subsets of $Y$ with $f(x)\in V_1$ and $g(x)\in V_2$. We may assume (possibly by shrinking $V_1$ or $V_2$) that both are basis elements for the topology of $Y$. Let $U=f^{-1}(V_1)\cap g^{-1}(V_2)$. Then $U$ is an open neighborhood of $x$. We may assume again that $U$ is a basis element for the topology on $X$ by shrinking it if necessary. Now, let $T_1$ be the (sub-)basis element for the basis-to-basis topology corresponding to $U$ and $V_1$. By construction, $f\in T_1$. Similarly, let $T_2$ be the basis element for the basis-to-basis topology corresponding to $U$ and $V_2$ and containing $g$. Since $V_1$ and $V_2$ are disjoint, so are $T_1$ and $T_2$. $\mathsf{T}(C(X,Y), \mathcal{B}_X, \mathcal{B}_Y)$ is Hausdorff.
\end{proof}

\begin{rem}
	Note that the basis-to-basis topology is not in general equal to the open-open topology. The former may depend on the choice of bases $\mathcal{B}_X$ and $\mathcal{B}_Y$ while the second is uniquely defined by the topologies of $X$ and $Y$.
\end{rem}

As experimental relationships are themselves distinguishable, we can recursively form experimental relationships between experimental relationships leading to functions of arbitrary order while remaining within the definitions provided. The framework is therefore complete.

\begin{table}[h]
	\centering
	\begin{tabular}{p{0.20\textwidth} p{0.7\textwidth}}
		Math/Topology & Science/Physics \\ 
		\hline 
		Hausdorff, second-countable space & Space of experimentally distinguishable elements, whose points are the possible values and whose open sets represent the experimentally attainable levels of precision. \\
		Open set & Verifiable set. We can verify experimentally that an element is within the set.  \\ 
		Closed set & Refutable set. We can verify experimentally that an element is not in the set. \\ 
		Basis & A collection of verifiable sets that can be used to distinguish an element\\
		Continuous \newline function &  An experimental relationship between two sets of experimentally distinguishable elements, which must preserve distinguishability \\
		Homeomorphism &  A perfect equivalence between spaces of experimentally distinguishable elements. \\
	\end{tabular} 
	\caption{Topology-to-physics dictionary. This table sums up the relationships established between mathematical and scientific concepts.}
\end{table}

\section{Conclusion}

What emerges from this work is that the primary application of topology in science is experimental distinguishability: its role is to keep track of what can be distinguished through experimentation. The importance of continuous functions in science stems from requiring that experimental relationships be consistent with experimental distinguishability. Therefore it is not that the deterministic evolution of a physical system happens to be continuous: it must be.

In light of this work, the effectiveness of mathematics in the natural sciences is perhaps not so unreasonable. We hope that the methods and results shown here can provide a more solid foundation to formalize experimental sciences.

\section*{Acknowledgments}
Funding for this work was provided in part by the MCubed program of the University of Michigan.

\bibliographystyle{elsarticle-num}
\bibliography{bibliography}

\end{document}